\documentclass[a4paper,12pt]{amsart}


\usepackage[english]{babel}
\usepackage{amsthm}
\usepackage{amssymb}
\usepackage{hyperref}

\addtolength{\headheight}{1.14998pt}

\addtolength{\oddsidemargin}{-.5cm}
\addtolength{\evensidemargin}{-1.5cm}
\addtolength{\textwidth}{2cm}

\newcommand{\Set}[1]{\left\{\, #1 \,\right\}}

\newcommand{\Span}[1]{\langle\, #1 \,\rangle}

\newcommand{\Size}[1]{\lvert #1 \rvert}

\DeclareMathOperator{\GL}{GL}
\DeclareMathOperator{\GF}{GF}
\DeclareMathOperator{\Aut}{Aut}
\DeclareMathOperator{\End}{End}

\renewcommand{\phi}[0]{\varphi}
\renewcommand{\theta}[0]{\vartheta}
\renewcommand{\epsilon}[0]{\varepsilon}

\newcommand{\F}{\text{$\mathbf{F}$}}

\theoremstyle{plain}

\newtheorem{dummy}{Dummy}
\numberwithin{dummy}{section}
\numberwithin{equation}{section}

\newtheorem{theorem}[dummy]{Theorem}
\newtheorem{lemma}[dummy]{Lemma}

\theoremstyle{definition}

\newtheorem{assumption}[dummy]{Assumption}

\theoremstyle{remark}

\newcommand{\FMEO}[1]{}

\hyphenation{au-to-mor-phism en-do-mor-phism}

\begin{document}

\bibliographystyle{amsalpha}

\date{8 July 2014, 12:47 CEST --- Version 2.03
}

\title[Central Automorphisms]
{A simple construction for a class of $p$-groups\\
with all of their automorphisms central}

\author{A.~Caranti}

\address[A.~Caranti]%
 {Dipartimento di Matematica\\
  Universit\`a degli Studi di Trento\\
  via Sommarive 14\\
  I-38123 Trento\\
  Italy} 

\email{andrea.caranti@unitn.it} 

\urladdr{http://science.unitn.it/$\sim$caranti/}

\begin{abstract}
  We exhibit a  simple construction, based on elementary linear
  algebra, for a class  of examples of finite 
  $p$-groups of  nilpotence class $2$  all of whose  automorphisms are
  central.
\end{abstract}

\keywords{finite $p$-groups, automorphisms, central automorphisms,
  en\-do\-mor\-phisms} 

\thanks{The  author gratefully acknowledges the support of the
  Department of Mathematics, University of Trento. The author is a 
  member of GNSAGA---Indam.}
  
\subjclass[2010]{20D15 20D45}

\maketitle

\thispagestyle{empty}

\section{Introduction}

In  June 2014,  Marc van Leeuwen~\cite{MvL}  inquired on  Mathematics
StackExchange whether there  is a group $P$ with an  element $a \in P$
such  that  there is no  automorphism  of  $P$  taking  $a$  to  its
inverse. 

In our  answer, we noted  that an example  was provided by any  of the
many constructions in the literature~\cite{HeLie, JoKo, Ea, H79, CaLe,
  Ca83, E-group, Mo94, Mo95}  of finite $p$-groups of nilpotence class
two in which all automorphisms are central, for $p$ an odd prime. For,
if $P$ is such a group, and $a \in P \setminus Z(P)$, then an image of
$a$ under automorphisms  is of the form $a z$, with  $z \in Z(P)$.  If
$a z = a^{-1}$,  then $a^{2} \in Z(P)$, and thus $a  \in Z(P)$, as $p$
is odd.

Marc van Leuween commented that  ``indeed giving a concrete example is
not so easy''. This made us realize that examples of finite $p$-groups
in  which all  automorphisms  are central,  although not  conceptually
difficult,  usually  rely  on  a  fair  amount  of  calculations  with
generators and relations. The goal of this paper is to give a class of
examples of such groups for which  calculations  can be kept to  a
minimum, whereas  a 
central role is  played by  linear  algebra.   

The examples are based on  one of the cases of~\cite[Section 4]{Ca83}.
They  are  constructed  according  to  the  linear  algebra  techniques
employed in~\cite{HeLie, DaHe, Ca83, GPS}, as described in~\cite{TAT},
which  we  review   in  Section~\ref{sec:prelim}.   The  examples themselves  are
presented in Section~\ref{sec:main}, while in Section~\ref{sec:endo}
we mention an extension to endomorphisms.

\section{Preliminaries}\label{sec:prelim}

Let $P$ be a group. Since the centre $Z(P)$ is a characteristic
subgroup of $P$, there is a natural morphism
$\Aut(P) \to \Aut(P/Z(P))$ whose kernel $\Aut_{c}(P)$ consists of the
central automorphisms of $P$, that is, those automorphisms of $P$ that
take every $a \in P$ to an element of $a Z(P)$. 

We review the setup of~\cite{TAT}. Let $V$ be a vector space of
dimension $n+1$ over the field $\F = \GF(p)$, where $p$ is a
prime. Let $W = \Lambda^{2} V$ be the exterior square of $V$. If  $f :
V \to W$ is a linear map, we will consider the group $G$ of
the elements of $\GL(V)$ that commute with $f$, 
\begin{equation}\label{eq:G}
  G = \Set {g \in \GL(V) : (v g) f = (v f) \widehat{g}\text{, for all
      $v \in V$}},
\end{equation}
where $\widehat{g}$ is the automorphism of $W$ induced by $g$.
Note that we write maps on the right, so our vectors are row vectors.

Choose now a basis $v_{0}, v_{1}, \dots, v_{n}$ of $V$, and the
corresponding basis $v_{j} \wedge v_{k}$ of $W$, for $j < k$. Write
$f$ in coordinates, that is, 
\begin{equation*}
v_{i} f = \sum_{j < k} a_{i,j,k} \cdot v_{j} \wedge v_{k}.
\end{equation*}
If $p$ is odd, we can construct a finite $p$-group $P$ via the
following presentation 
\begin{equation}\label{eq:pres}
  \begin{aligned}
    P
    =
    \Span{x_{0}, x_{1}, \dots, x_{n}
      :\
      &\text{$[[x_{i}, x_{j}], x_{k}] = 1$ for all $i, j, k$, }
      \\&\text{$x_{i}^{p} = \prod_{j < k} [x_{j}, x_{k}]^{a_{i,j,k}}$
        for all $i$,}
      \\&\text{$[x_{i}, x_{j}]^{p} = 1$ for all $i, j$}
    }.
  \end{aligned}
\end{equation}
Note  that here the  third line  of relations  follows from  the first
two.  In fact the  first two lines of relations  say  that commutators
and $p$-th  powers of 
generators  are central, so  that we  have $1  = [x_{i}^{p},  x_{j}] =
[x_{i}, x_{j}]^{p}$, as $x_{i}$ commutes with $[x_{i}, x_{j}]$.

We have that $P$ is a group of nilpotence class two and order
$\Size{P} = p^{n+1 + \binom{n+1}{2}}$, with $P' = \Phi(P) = Z(P)$ of
order $p^{\binom{n+1}{2}}$, and $P/P'$ of order $p^{n+1}$. Moreover
$P^{p}$ has order $p^{\dim(V f)}$. 

If $p = 2$, we appeal to an idea of Zurek~\cite{Zurek}, and modify
\eqref{eq:pres} replacing 
$p$-th powers of generators with $4$-th powers. 
The presentation thus becomes
\begin{equation*}\label{eq:pres2}
  \begin{aligned}
    P
    =
    \Span{x_{0}, x_{1}, \dots, x_{n}
      :\
      &\text{$[[x_{i}, x_{j}], x_{k}] = 1$ for all $i, j, k$, }
      \\&\text{$x_{i}^{4} = \prod_{j < k} [x_{j}, x_{k}]^{a_{i,j,k}}$ for all $i$,}
      \\&\text{$[x_{i}, x_{j}]^{2} = 1$ for all $i, j$}
    }.
  \end{aligned}
\end{equation*}
Here we have $\Size{P} = 2^{2(n+1) +
  \binom{n+1}{2}}$, $P'$ has order $2^{\binom{n+1}{2}}$, $P/P'$ has
order $2^{2(n+1)}$, $P^{4}$ has order $2^{\dim(V f)}$, and $P' \le
\Phi(P) = Z(P)$.  This time, the relations $[x_{i}, x_{j}]^{2} = 1$
are necessary.

Now it is shown in~\cite[Section~3]{TAT} that 
\begin{theorem}\label{theorem}
  In the notation above,
  \begin{equation*}
    \Aut(P)/\Aut_{c}(P) \cong G.
  \end{equation*}
\end{theorem}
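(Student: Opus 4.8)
The plan is to translate the group-theoretic structure of $P$ into the linear data $(V, W, f)$, and then to read off $\Aut(P)/\Aut_{c}(P)$ from that translation. As $\F$-vector spaces I would identify $V$ with $P/Z(P)$ via $v_{i} \leftrightarrow x_{i} Z(P)$, and $W = \Lambda^{2} V$ with $P'$ via $v_{j} \wedge v_{k} \leftrightarrow [x_{j}, x_{k}]$ for $j < k$ --- legitimate since these $\binom{n+1}{2}$ elements generate $P'$ and $\Size{P'} = p^{\binom{n+1}{2}}$, so they form an $\F$-basis. Two features of $P$ transport across this dictionary. First, the commutator map $P/Z(P) \times P/Z(P) \to P'$ is well defined (the centre is central and $P$ has class two), $\F$-bilinear, alternating, and its values span $P'$; under the identifications it becomes exactly the wedge map $V \times V \to W$, $(u, v) \mapsto u \wedge v$. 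Second, the $p$-th power map --- the $4$-th power map when $p = 2$ --- descends to an $\F$-linear map $P/Z(P) \to P'$: for $p$ odd this follows from $(ab)^{p} = a^{p} b^{p} [b, a]^{\binom{p}{2}}$ (valid in class two), together with $p \mid \binom{p}{2}$ and $\exp(P') = p$; for $p = 2$ one first notes that $\Phi(P) = Z(P)$ has exponent dividing $4$, so the $4$-th power map annihilates $Z(P)$, and then uses $(ab)^{4} = a^{4} b^{4}$ in the groups at hand. By the defining relations this linear map is precisely $f$.

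Given this, the homomorphism $\Aut(P) \to G$ is quick. Any $\alpha \in \Aut(P)$ preserves the characteristic subgroups $Z(P)$ and $P'$, hence induces $\bar\alpha \in \GL(V)$ on $P/Z(P)$ and $\alpha' \in \GL(W)$ on $P'$ (these are linear automatically, the groups in question being $\F_p$-vector spaces). Reading the compatibility of $\alpha$ with the commutator map through the dictionary gives $(u \bar\alpha) \wedge (v \bar\alpha) = (u \wedge v) \alpha'$ for all $u, v \in V$; as the $u \wedge v$ span $W$, this forces $\alpha' = \widehat{\bar\alpha}$. Substituting this into the compatibility of $\alpha$ with the power map gives $(v \bar\alpha) f = (v f) \widehat{\bar\alpha}$ for all $v \in V$, that is, $\bar\alpha \in G$ by \eqref{eq:G}. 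The map $\alpha \mapsto \bar\alpha$ is obviously a group homomorphism $\Aut(P) \to G$, and its kernel is $\Aut_{c}(P)$ by the very definition of the latter.

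It remains to prove surjectivity, which is the only part requiring work. Given $g \in G$, write $v_{i} g = \sum_{j} g_{i,j} v_{j}$, choose integer representatives for the $g_{i,j}$, fix an ordering of the generators, and set $x_{i} \alpha = \prod_{j} x_{j}^{g_{i,j}}$. I would check that $\alpha$ respects the relations of \eqref{eq:pres} (or of its $p = 2$ variant): the relations $[[x_{i}, x_{j}], x_{k}] = 1$ and $[x_{i}, x_{j}]^{p} = 1$ hold because the target again has class two with $\exp(P') = p$ (resp.\ $2$); while for $x_{i}^{p} = \prod_{j < k} [x_{j}, x_{k}]^{a_{i,j,k}}$ one expands both sides with the class-two power identity and with $[ab, c] = [a, c][b, c]$, $[a, bc] = [a, b][a, c]$, and translates the left-hand side into $(v_{i} g) f$ and the right-hand side into $(v_{i} f) \widehat{g}$ under the dictionary; these coincide exactly because $g \in G$. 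So $\alpha$ is an endomorphism of $P$; it induces $g$ on $P/Z(P)$ and $\widehat{g}$ on $P'$, both bijective, so its image covers $P/Z(P)$ and contains $P' = \Span{[x_{i}\alpha, x_{j}\alpha] : i < j}$, hence equals $P$; as $P$ is finite, $\alpha \in \Aut(P)$, and $\bar\alpha = g$. Thus $\Aut(P) \to G$ is onto, and the first isomorphism theorem gives $\Aut(P)/\Aut_{c}(P) \cong G$.

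The main obstacle is the bookkeeping in the surjectivity step: expanding the $p$-th power of a product $\prod_{j} x_{j}^{g_{i,j}}$ of many factors in a class-two group, checking that every commutator correction carries a factor $\binom{p}{2}$ (resp.\ $\binom{4}{2}$) and so dies in $P'$, and then matching the surviving expression with $(v_{i} g) f = (v_{i} f) \widehat{g}$ through the wedge identification. This is precisely where the hypothesis $g \in G$ is consumed, and keeping the indices and the right-hand-map conventions straight is the one delicate point; the rest is formal.
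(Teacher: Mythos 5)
The paper does not actually prove this theorem: it quotes it from \cite[Section~3]{TAT}, offering only the one-line heuristic that an automorphism of $P/Z(P)=P/\Phi(P)$ lifts to $P$ precisely when it preserves the power relations, i.e.\ commutes with $f$. Your argument is a correct, self-contained reconstruction of exactly that mechanism: the dictionary $P/Z(P)\leftrightarrow V$, $P'\leftrightarrow\Lambda^{2}V$ turning commutation into the wedge and the $p$-th (resp.\ $4$-th) power map into $f$, the induced homomorphism $\Aut(P)\to G$ with kernel $\Aut_{c}(P)$, and surjectivity via von Dyck together with the class-two expansion $(y_{1}\cdots y_{m})^{N}=\prod y_{j}^{N}\prod_{j<k}[y_{k},y_{j}]^{\binom{N}{2}}$, where $p\mid\binom{p}{2}$ for $p$ odd and $2\mid\binom{4}{2}$ handle both cases. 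I see no gap; this is the intended proof.
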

The point of this, as explained in~\cite{TAT}, is that for an
automorphism $g$ of $P/Z(P) = P/\Phi(P)$ to be induced by an automorphism
of $P$, one needs $g$ to preserve the $p$-th (respectively, $4$-th)
power relations, that is, the linear map $f$.

\section{The examples}\label{sec:main}

We will now construct  a class of linear maps $f$, as in the previous
Section, for which the group $G$ 
of~\eqref{eq:G} is $\Set{1}$. According to Theorem~\ref{theorem}, this
will provide examples of finite $p$-groups $P$ of nilpotence class $2$
with $\Aut(P) = \Aut_{c}(P)$.

Let $V$ be a vector space of dimension $n+1 \ge 4$ over $\F =
\GF(p)$, where $p$ is a prime. Fix a basis $v_{0}, v_{1}, \dots ,
v_{n}$ of $V$, and let  
\begin{equation*}
  U = \Span{v_{1}, \dots , v_{n}}.
\end{equation*}
On the exterior square $W =
\Lambda^{2} V$, consider a basis which begins with
\begin{equation*}
  v_{0} \wedge v_{1}, v_{0} \wedge v_{2}, \dots , v_{0} \wedge v_{n},
\end{equation*}
and continues with the $v_{i} \wedge v_{j}$, for $1 \le i < j \le n$.

We now make our choice for $f$.
\begin{assumption}\label{assump}
  Consider the linear map $f : V \to W$ which, with respect to the given
  bases, has blockwise matrix
  \begin{equation}\label{eq:rels}
    \begin{bmatrix}
      b & c\\
      A & 0
    \end{bmatrix},
  \end{equation}
  where $b$ is a $1 \times n$ vector, $c$ is a $1 \times
  \dbinom{n}{2}$ vector, 
  $A$ is an $n \times n$ matrix, and $0$ is an $n \times \dbinom{n}{2}$  zero
  matrix. 
  Moreover, we take
  \begin{itemize}
  \item $b, c \ne 0$, and 
  \item $A$ to be  the companion matrix \cite[p.~197]{BAI2} of  the
    minimal polynomial 
    $m$ over $\F$
    of  a primitive  element $\alpha$  of $\GF(p^{n})$.  
  \end{itemize}
\end{assumption}

We collect a few elementary facts about the matrix $A$.
\begin{lemma}\label{lemma:A}\
  \begin{enumerate}
  \item\label{item:roots} The  roots of $m$,  i.e.~the eigenvalues of $A$,  are 
    \begin{equation*}
          \alpha,
    \alpha^{p}, \dots , \alpha^{p^{n-1}}.
    \end{equation*}
  \item\label{item:order} $A$ has multiplicative order $p^{n}-1$.
  \item\label{item:powers}  $\F[A]$ is  a field  of order  $p^{n}$, and
    $\F[A] = \Set{0} \cup \Set{A^{i} : 0 \le i < p^{n}-1}$.
  \item\label{item:simple} $\F^{n}$ is a one-dimensional
    $\F[A]$-vector space.
  \item\label{item:cent} The centralizer 
    \begin{equation*}
      C_{\End(\F^{n})}(A)
    \end{equation*}
    of $A$ in $\End(\F^{n})$ is $\F[A]$.
  \end{enumerate}
\end{lemma}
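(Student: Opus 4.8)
The plan is to prove the five assertions of Lemma~\ref{lemma:A} essentially in the order listed, since each one feeds the next, and to lean throughout on the standard theory of the companion matrix and of finite fields.

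First, recall that the companion matrix $A$ of the degree-$n$ polynomial $m$ has $m$ as both its minimal and characteristic polynomial; this is the classical fact cited from \cite{BAI2}. Since $\alpha$ is a primitive element of $\GF(p^{n})$, its minimal polynomial $m$ over $\F$ has degree exactly $n$, and the conjugates of $\alpha$ over $\F$ are $\alpha, \alpha^{p}, \dots, \alpha^{p^{n-1}}$ by the usual Frobenius-orbit description of Galois conjugates in finite fields. These conjugates are precisely the roots of $m$, hence the eigenvalues of $A$ (in $\GF(p^{n})$), giving \eqref{item:roots}. For \eqref{item:powers} and \eqref{item:simple}, observe that $\F[A] \cong \F[t]/(m(t))$ because $m$ is the minimal polynomial of $A$; since $m$ is irreducible of degree $n$, this quotient is a field with $p^{n}$ elements, and its nonzero elements form a cyclic group, so $\F[A] = \{0\} \cup \{A^{i} : 0 \le i < p^{n}-1\}$ once we know $A$ has order $p^{n}-1$ --- but in fact it is cleaner to first note $\F[A]$ is a field of order $p^{n}$, and then deduce the multiplicative statements. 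The fact that $\F^{n}$ is one-dimensional over $\F[A]$ is a dimension count: $\F^{n}$ is a faithful $\F[A]$-module of $\F$-dimension $n = [\F[A]:\F]$, and any nonzero module over a field of that size forces dimension one.

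For \eqref{item:order}, the multiplicative order of $A$ is the order of the image of $t$ in $\F[t]/(m(t)) \cong \GF(p^{n})$, which is exactly the order of $\alpha$ as an element of $\GF(p^{n})$; primitivity of $\alpha$ means that order is $p^{n}-1$. Finally, \eqref{item:cent}: one inclusion is trivial, $\F[A] \subseteq C_{\End(\F^{n})}(A)$. For the reverse, the standard argument is that since $m$ is the minimal polynomial of $A$ and equals its characteristic polynomial (equivalently, $\F^{n}$ is a cyclic $\F[A]$-module, which we just established in \eqref{item:simple}), every $A$-commuting endomorphism is a polynomial in $A$: pick a cyclic generator $w$, so $w, wA, \dots, wA^{n-1}$ is a basis; if $\psi$ commutes with $A$ and $w\psi = w \cdot q(A)$ for some polynomial $q$, then $wA^{i}\psi = w\psi A^{i} = w q(A) A^{i}$, so $\psi = q(A)$ on a spanning set, hence $\psi \in \F[A]$.

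I do not expect a genuine obstacle here; every step is routine linear algebra and finite-field theory. The only point requiring a little care is the logical ordering among \eqref{item:order}, \eqref{item:powers}, and the cyclic-generator input to \eqref{item:cent}: one should establish that $\F[A]$ is a field of order $p^{n}$ directly from the irreducibility of $m$ (independently of the order of $A$), then read off the order of $A$ from primitivity of $\alpha$, and only then assemble the explicit list $\{0\} \cup \{A^{i}\}$. With that sequencing the whole lemma is a short chain of standard facts, so in the write-up I would state the irreducibility and degree of $m$ once at the start and then dispatch the five items briskly.
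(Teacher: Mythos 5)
Your proposal is correct and follows essentially the same route as the paper: irreducibility of $m$ of degree $n$ plus the standard companion-matrix facts give the first three items, and the centralizer is identified by viewing $\F^{n}$ as a one-dimensional (equivalently cyclic) vector space over the field $\F[A]$. The only cosmetic differences are that the paper obtains the one-dimensionality from the cyclic-module property of companion matrices rather than your dimension count $n/[\F[A]:\F]=1$, and disposes of the centralizer in one line as the endomorphism ring of a one-dimensional $\F[A]$-vector space where you spell out the cyclic-vector computation.
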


\begin{proof}
  \eqref{item:roots} follows from the fact that $\alpha$ is a root of
  $m$, and $m$ is irreducible in $\F[x]$, of degree $n$.

  \eqref{item:order} follows immediately from the previous point.

  \eqref{item:powers} follows from $\F[A] \cong \F[x]/(m)$, and
  \eqref{item:order}. 

  \eqref{item:simple} follows from the fact that $A$ is a companion matrix, and thus $\F^{n}$ is a cyclic $\F[A]$-module.
  
  \eqref{item:cent} now follows from the previous point, as the given
  centralizer is the ring of endomorphisms of the $\F[A]$-vector space
  $\F^{n}$.
\end{proof}

We now collect a few facts about $f$ and the group $G$ of~\eqref{eq:G}.
\begin{lemma}\label{lemma:omnibus}\

\begin{enumerate}
\item \label{lemma:inj} $f$ is  injective.
\item \label{Uf} 
  $
      U  f = v_{0} \wedge V = v_{0} \wedge U,
 $
and this is a subspace of $W$ of dimension $n$.
\item \label{lemma:u}  
  If $u \in U$ satisfies $u \wedge V \le V f$, then $u =
  0$.
\item \label{lemma:v0}$
    \Span{v_{0}} = \Set{x \in V : x \wedge V \le V f}.
 $
\item \label{lemma:v0inv} $\Span{v_{0}}$ is left invariant by $G$.
\item \label{lemma:U}$
  U = \Set{ x \in V : x f \in v_{0} \wedge V }.
$
\item \label{lemma:Uinv} $U$ is left invariant by $G$.
\end{enumerate}
\end{lemma}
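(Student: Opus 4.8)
The plan is to distill two facts from Assumption~\ref{assump} and Lemma~\ref{lemma:A} and run the whole lemma off them. Decompose $W = (v_{0} \wedge U) \oplus \Lambda^{2} U$, where $v_{0} \wedge U = v_{0} \wedge V$ has dimension $n$ with basis the $v_{0} \wedge v_{j}$, $1 \le j \le n$. The zero block in~\eqref{eq:rels} says $U f \le v_{0} \wedge U$ and that, under the identification $v_{0} \wedge v_{j} \leftrightarrow v_{j}$, the restriction $f|_{U}$ is the matrix $A$, which is invertible by Lemma~\ref{lemma:A}\eqref{item:order}; the first row of~\eqref{eq:rels} says that the $\Lambda^{2} U$-component of $v_{0} f$ equals $c \ne 0$, whereas every element of $v_{0} \wedge U$ has zero $\Lambda^{2} U$-component. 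Given these, \eqref{lemma:inj} and \eqref{Uf} are immediate: if $(\lambda v_{0} + u) f = 0$ with $u \in U$, its $\Lambda^{2} U$-component $\lambda c$ vanishes, so $\lambda = 0$, and then $u f = 0$ with $A$ invertible forces $u = 0$; and $f|_{U}$ corresponds to the invertible $A$, hence maps $U$ bijectively onto $v_{0} \wedge U = v_{0} \wedge V$.

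The crux is \eqref{lemma:u}, which I would prove by a dimension count in $W = (v_{0} \wedge U) \oplus \Lambda^{2} U$. For $0 \ne u \in U$ the map $x \mapsto u \wedge x$ on $U$ has kernel $\Span{u}$, so $u \wedge U$ has dimension $n - 1$, and clearly $u \wedge U \le (u \wedge V) \cap \Lambda^{2} U$. On the other side, $V f \supseteq U f = v_{0} \wedge U$ by \eqref{Uf}, so $V f + \Lambda^{2} U = W$; since $f$ is injective, the dimension formula yields $\dim\bigl((V f) \cap \Lambda^{2} U\bigr) = (n+1) + \binom{n}{2} - \dim W = 1$. As $n + 1 \ge 4$ gives $n - 1 \ge 2$, the containment $u \wedge V \le V f$ would force $u \wedge U \le (V f) \cap \Lambda^{2} U$, i.e.\ $n - 1 \le 1$, a contradiction; hence $u = 0$. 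Pinning $(V f) \cap \Lambda^{2} U$ down to a line is the one place I expect to have to be careful — this is exactly where the block shape of~\eqref{eq:rels}, the hypothesis $c \ne 0$, and the bound $n + 1 \ge 4$ are used — while everything else is formal.

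Item \eqref{lemma:v0} then reduces to \eqref{lemma:u}: the inclusion $\Span{v_{0}} \subseteq \Set{x \in V : x \wedge V \le V f}$ is clear from \eqref{Uf}, and if $x = \lambda v_{0} + u$ satisfies $x \wedge V \le V f$, then from $v_{0} \wedge V = v_{0} \wedge U = U f \le V f$ we get $u \wedge V \le (x \wedge V) + (v_{0} \wedge V) \le V f$, so $u = 0$ by \eqref{lemma:u} and $x \in \Span{v_{0}}$. For \eqref{lemma:v0inv}, observe that for $g \in G$ the induced map $\widehat{g}$ sends $x \wedge V$ to $(x g) \wedge (V g) = (x g) \wedge V$ and sends $V f$ onto $(V g) f = V f$ by~\eqref{eq:G} and $g \in \GL(V)$; hence the set in \eqref{lemma:v0}, and with it $\Span{v_{0}}$, is left invariant by $G$. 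For \eqref{lemma:U}: $U \subseteq \Set{x \in V : x f \in v_{0} \wedge V}$ is \eqref{Uf}, and conversely if $(\lambda v_{0} + u) f \in v_{0} \wedge V = v_{0} \wedge U$ then its $\Lambda^{2} U$-component $\lambda c$ vanishes, so $\lambda = 0$. Finally \eqref{lemma:Uinv}: for $g \in G$ and $x \in U$ we have $(x g) f = (x f) \widehat{g}$ with $x f \in v_{0} \wedge V$ by \eqref{lemma:U}, and $(v_{0} \wedge V) \widehat{g} = (v_{0} g) \wedge V = v_{0} \wedge V$ since $g$ fixes $\Span{v_{0}}$ by \eqref{lemma:v0inv}; hence $(x g) f \in v_{0} \wedge V$ and $x g \in U$ by \eqref{lemma:U} again.
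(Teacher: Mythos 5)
Your proposal is correct, and items \eqref{lemma:inj}, \eqref{Uf}, \eqref{lemma:v0}--\eqref{lemma:Uinv} follow the same lines as the paper. The one genuine difference is in the key item \eqref{lemma:u}. The paper argues in coordinates: it looks at the coordinates of $u \wedge v_{2}$ and $u \wedge v_{3}$ with respect to $v_{1} \wedge v_{2}$ and $v_{1} \wedge v_{3}$, obtains the matrix $c_{1} I_{2}$, and forces $c_{1}^{2} = 0$ because these vectors lie in $V f$, whose image modulo $v_{0} \wedge V = U f$ is only one-dimensional; repeating this for each $c_{i}$ gives $u = 0$. You instead make the same underlying fact coordinate-free: since $f$ is injective and $V f \supseteq U f = v_{0} \wedge U$, the intersection $(V f) \cap \Lambda^{2} U$ has dimension $(n+1) + \binom{n}{2} - \dim W = 1$, while $u \wedge U$ has dimension $n - 1 \ge 2$ for $u \ne 0$, and $u \wedge V \le V f$ would squeeze the larger space into the smaller. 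Both arguments hinge on the same two ingredients (the single extra dimension contributed by $v_{0} f$, i.e.\ $c \ne 0$, and the bound $n+1 \ge 4$), but your dimension count disposes of all the coefficients $c_{i}$ at once and makes it transparent exactly where $n + 1 \ge 4$ enters ($n - 1 > 1$), whereas the paper's version is more computational and must be repeated coordinate by coordinate. Your version is, if anything, a slight improvement in clarity; there is no gap.
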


\begin{proof}
\eqref{lemma:inj} follows  from  Assumption~\ref{assump}, since $A$ is
invertible, and  $c  \ne 0$ 
in~\eqref{eq:rels}.

The formula of~\eqref{Uf} now follows from the shape of the matrix for
$f$ in Assumption~\ref{assump}. 

To prove~\eqref{lemma:u}, let $u = c_{1} v_{1} + \dots + c_{n} v_{n}$
satisfy $u \wedge V \le V f$. We will show that $c_{1} 
  = 0$, but a similar argument yields that all $c_{i}$ have to be
  zero. Let us look at the coordinates of $u \wedge v_{2}$ and $u \wedge
  v_{3}$ with respect 
  to $v_{1} \wedge v_{2}$ and $v_{1} \wedge v_{3}$, which yield the $2
  \times 2$ matrix
  \begin{equation*}
    \begin{bmatrix}
      c_{1} & 0\\
      0 & c_{1}
    \end{bmatrix}.
  \end{equation*}
  By~\eqref{lemma:inj} and~\eqref{Uf}, the dimension of $V f / U f = V
  f/(v_{0} \wedge V)$ is
  $1$. Thus this matrix must have rank at most $1$, so that
  $c_{1}^{2} = 0$. 

To prove~\eqref{lemma:v0}, let $0 \ne x \in V$ be such that $x \wedge
V \le V f$. By~\eqref{lemma:u}, $x = c v_{0} + u$, for some $c \ne
0$, and $u \in U$. But then by~\eqref{Uf} $u \wedge V \le V f$, so
that $u = 0$ again by~\eqref{lemma:u}.

\eqref{lemma:v0inv} follows from the previous point.

\eqref{lemma:U} follows from~\eqref{lemma:inj} and~\eqref{Uf}, and
implies~\eqref{lemma:Uinv}, because of~\eqref{lemma:v0inv}. 
\end{proof}

(Note that the argument in  the proof of~\eqref{lemma:u} fails when $n
= 2$, see~\cite{DaHe}, and this is the reason we have taken $n + 1 \ge
4$.)

We can now state our main result.
\begin{theorem}
With $f$ as in  Assumption~\ref{assump}, the group $G$ of~\eqref{eq:G}
is $\Set{1}$.
\end {theorem}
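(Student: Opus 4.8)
The plan is to turn the invariance statements of Lemma~\ref{lemma:omnibus} into a block‑diagonal description of an arbitrary $g \in G$, translate the defining condition of $G$ into two matrix identities over $\F$, and then extract first $\lambda = 1$ and then $B = I$ from those identities using Lemma~\ref{lemma:A}. So let $g \in G$. By parts~\eqref{lemma:v0inv} and~\eqref{lemma:Uinv} of Lemma~\ref{lemma:omnibus}, the line $\Span{v_{0}}$ and the subspace $U$ are both invariant under $g$; hence $v_{0} g = \lambda v_{0}$ for some nonzero $\lambda \in \F$, and $g$ restricts to an invertible linear map of $U$, whose matrix with respect to $v_{1}, \dots, v_{n}$ I will call $B$. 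Thus, in the basis $v_{0}, v_{1}, \dots, v_{n}$, the matrix of $g$ is block diagonal with diagonal blocks $\lambda$ and $B$, and it remains to show $\lambda = 1$ and $B = I$.

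Next I would expand the condition $(v g) f = (v f)\widehat{g}$ on basis vectors. For $v = v_{i}$ with $1 \le i \le n$, Assumption~\ref{assump} gives $v_{i} f = \sum_{j} A_{i,j}\, v_{0} \wedge v_{j}$, and a direct computation yields $(v_{i} g) f = \sum_{j}(BA)_{i,j}\, v_{0} \wedge v_{j}$ and $(v_{i} f)\widehat{g} = \lambda \sum_{j}(AB)_{i,j}\, v_{0} \wedge v_{j}$, so that $BA = \lambda AB$. For $v = v_{0}$, write $v_{0} f = v_{0} \wedge \beta + \omega$ with $\beta \in U$ the vector with coordinates $b$ and $\omega \in \Lambda^{2} U$ the part with coordinates $c$; comparing the components lying in $v_{0} \wedge U$ of the identity $\lambda(v_{0} f) = (v_{0} f)\widehat{g}$, and using $U \cap \Span{v_{0}} = \Zero$, one obtains $\beta B = \beta$.

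Now comes the crucial step, deducing $\lambda = 1$ from $BA = \lambda AB$. Since $B$ is invertible, conjugation by $B$ is an $\F$-algebra automorphism of $\End(\F^{n})$ sending $A$ to $\lambda A$, hence sending $\F[A]$ onto $\F[\lambda A] = \F[A]$; restricted to the field $\F[A] \cong \GF(p^{n})$ it fixes $\F$ pointwise, so it is a power of the Frobenius and $\lambda A = B A B^{-1} = A^{p^{j}}$ for some $j$. Identifying $\F[A]$ with $\GF(p^{n})$ so that $A$ corresponds to $\alpha$ (Lemma~\ref{lemma:A}\eqref{item:powers}), this reads $\lambda = \alpha^{p^{j}-1}$; as $\lambda \in \GF(p)$, so $\lambda^{p-1} = 1$, and $\alpha$ has order $p^{n}-1$ (Lemma~\ref{lemma:A}\eqref{item:order}), we get $(p^{n}-1) \mid (p^{j}-1)(p-1)$, and an elementary size comparison (after reducing $j$ modulo $n$) forces $n \mid j$ and hence $\lambda = 1$. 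With $\lambda = 1$, the identity $BA = AB$ together with Lemma~\ref{lemma:A}\eqref{item:cent} gives $B \in \F[A]$, so $B$ is a nonzero element of the field $\F[A]$; since $\F^{n}$ is a one‑dimensional $\F[A]$-vector space by Lemma~\ref{lemma:A}\eqref{item:simple} and $\beta \ne 0$ (because $b \ne 0$), the relation $\beta B = \beta$ forces $B$ to be the identity element of $\F[A]$, i.e.\ the identity matrix. Hence $g = 1$ and $G = \Set{1}$.

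The step I expect to be the main obstacle is precisely the implication $BA = \lambda AB \Rightarrow \lambda = 1$: one must notice that conjugation by $B$ induces a field automorphism of $\F[A]$, hence a power of the Frobenius, and then run the number‑theoretic size argument on $(p^{n}-1) \mid (p^{j}-1)(p-1)$. A naive trace argument would not suffice here, since the minimal polynomial of a primitive element of $\GF(p^{n})$ can have vanishing coefficient of $x^{n-1}$. Once $\lambda = 1$ is established, everything else is forced by Lemma~\ref{lemma:A} and by the shape of $f$, with the hypothesis $b \ne 0$ entering only at the very last step.
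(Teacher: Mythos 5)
Your proof is correct and follows essentially the same route as the paper: block-diagonalize $g$ via parts \eqref{lemma:v0inv} and \eqref{lemma:Uinv} of Lemma~\ref{lemma:omnibus}, extract the two relations $BA=\lambda AB$ and $\beta B=\beta$, deduce $\lambda=1$ from the fact that $\lambda\alpha$ must be a Galois conjugate $\alpha^{p^{j}}$ of $\alpha$ together with the size/divisibility argument on $(p^{n}-1)\mid(p^{j}-1)(p-1)$, and then use Lemma~\ref{lemma:A}\eqref{item:cent} and $b\ne 0$ to force $B=1$. The only (cosmetic) differences are that the paper obtains $\gamma\alpha=\alpha^{p^{t}}$ by comparing eigenvalues of the conjugate matrices rather than by invoking the Frobenius, and finishes by noting that no nontrivial power of $A$ has eigenvalue $1$ rather than by your cleaner $\F[A]$-vector-space argument.
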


\begin{proof}
\eqref{lemma:v0inv} and~\eqref{lemma:Uinv} of
Lemma~\ref{lemma:omnibus} allow us to write an element 
$g \in G$ in matrix form, with 
respect to the given basis of $V$, as
\begin{equation*}
  g = 
  \begin{bmatrix}
    \gamma & 0\\
    0      & \Delta\\
  \end{bmatrix}
\end{equation*}
where $\gamma \in \F^{\star}$, and $\Delta \in \GL(n,\F)$. By the
definition~\eqref{eq:G} 
of $G$, and Assumption~\ref{assump}, we have
\begin{equation}\label{eq:theequality}
  \begin{bmatrix}
    \gamma & 0\\
    0      & \Delta\\
  \end{bmatrix}
  \begin{bmatrix}
    b & c\\
    A & 0
  \end{bmatrix}
  =
  \begin{bmatrix}
    b & c\\
    A & 0
  \end{bmatrix}
  \begin{bmatrix}
    \gamma \Delta & 0\\
    0 & \widehat{\Delta}
  \end{bmatrix},
\end{equation}
where $\widehat{\Delta}$ is the matrix induced by $\Delta$ on $U
\wedge U$. We will only need consider the following two consequences of~\ref{eq:theequality}:
\begin{equation}\label{eq:a}
  \Delta A \Delta^{-1} = \gamma A,
\end{equation}
and
\begin{equation}\label{eq:b}
  b \Delta = b.
\end{equation}

To deal with~\eqref{eq:a}, we could appeal to~\cite{Wall}, but prefer
to give a simple direct argument.
As noted in Lemma~\ref{lemma:A}.\eqref{item:roots}, the eigenvalues of $A$ are 
\begin{equation}\label{eq:eigenA}
  \alpha, \alpha^{p}, \dots, \alpha^{p^{n-1}},
\end{equation}
with $\alpha$ a primitive element, so that those of
$\gamma A$ are  
\begin{equation*}
 \gamma \alpha, \gamma \alpha^{p}, \dots,  \gamma \alpha^{p^{n-1}}.
\end{equation*} 
By~\eqref{eq:a} we have $\gamma \alpha = \alpha^{p^{t}}$
for some $t$. If $t > 0$, then 
\begin{equation*}
  \alpha^{p^{t} - 1} = \gamma \in \GF(p)^{\star},
\end{equation*}
with $p^{t} - 1 > 0$,
so that the order
\begin{equation*}
\frac{p^{n}-1}{p-1} = 1 + p + \dots + p^{n-1}
\end{equation*}
of $\alpha$ in
$\GF(p^{n})^{\star}/\GF(p)^{\star}$ divides $p^{t} - 1 < p^{n-1}$, a
contradiction. Thus $t = 0$ and $\gamma = 1$.

It follows that $\Delta$ is in the centralizer of $A$ in $\GL(n, \F)$,
and thus, according to Lemma~\ref{lemma:A},  $\Delta$ is
a power of $A$. 

But once more since the eigenvalues of $A$ are as in~\eqref{eq:eigenA}, with
$\alpha$ a primitive element, the only
power of $A$ to have an 
eigenvalue $1$ is $1$. Since we have~\eqref{eq:b}, with $b \ne 0$ by
Assumption~\ref{assump}, we obtain that $\Delta = 1$ and thus $G = \{ 1 
\}$ as claimed.
\end{proof}


\section{Endomorphisms}\label{sec:endo}

The arguments of the previous Section can be slightly extended to show
that the set
\begin{equation*}
  G = \Set {g \in \End(V) : (v g) f = (v f) \hat{g}\text{, for all $v
      \in V$}} 
\end{equation*}
of the endomorphisms of $V$ that commute with $f$
consists of $0$ and $1$. Now in our examples $P$ the centre 
$Z(P)$ is fully invariant, as it equals $\Phi(P)$, 
and thus an immediate extension of
Theorem~\ref{theorem} yields that an endomorphism of
$P$ either  maps $P$
into $Z(P)$, or is a central automorphism,
so that $P$ is an E-group~\cite{Fau, Mal77, Mal80,
  E-group, CFdG}, that is, a 
group in which each element commutes with all of its images under
endomorphisms.   

To prove this, we proceed as in the previous Section, except that we make no
assumptions on $\gamma$ and $\Delta$. We have
from~\eqref{eq:theequality} 
\begin{equation}\label{eq:aa}
  \Delta A  = \gamma A \Delta.
\end{equation}
If $\gamma  = 0$,  then $\Delta =  0$. If  $\gamma \ne 0$,  it follows
from~\ref{eq:aa}  that $\ker(\Delta)$  is $A$-invariant, that is, a
$\F[A]$-vector subspace of the one-dimensional $\F[A]$-vector space
$\F^{n}$. Therefore we have either $\ker(\Delta) =
\Set{0}$, that is, $\Delta$ is invertible, and we proceed as above, or
$\ker(\Delta) =  \F^{n}$, that  is, $\Delta =  0$. Now~\eqref{eq:theequality}
yields $\gamma  b = 0$, a contradiction  to $\gamma \ne 0$  and $b \ne
0$.

\providecommand{\bysame}{\leavevmode\hbox to3em{\hrulefill}\thinspace}
\providecommand{\MR}{\relax\ifhmode\unskip\space\fi MR }
\providecommand{\MRhref}[2]{%
  \href{http://www.ams.org/mathscinet-getitem?mr=#1}{#2}
}
\providecommand{\href}[2]{#2}

\end{document}